\numberwithin{equation}{section} 
\newcommand{\ud}{\,d} 
\newcommand{\Sym}{\mathbb{S}} 
\newcommand{\R}{\mathbb{R}}
\renewcommand\P{{\mathcal P}}
\newcommand\T{{\mathcal T}}
\newcommand\x{\times}
\renewcommand{\div}{\operatorname{div}} 
\newcommand{\grad}{\operatorname{grad}} 
\newcommand{\eps}{\operatorname{\epsilon}}
\newtheorem{thm}{Theorem}[section] 
\newtheorem{lemma}[thm]{Lemma}
\newtheorem{rem}[thm]{Remark}
\newcommand{\0}{{\scriptscriptstyle0}}
\newcommand{\jump}[1]{\llbracket #1 \rrbracket}
\begin{document}
\title{Nonconforming tetrahedral mixed finite elements for elasticity}

\thanks{The work of the first author was supported by NSF grant
DMS-1115291. The work of the second author was supported by
NSF grant DMS-0811052 and the Sloan Foundation.  The work of the
third author was supported by the Research Council of Norway
through a Centre of Excellence grant
to the Centre of Mathematics for Applications.}

\author{Douglas Arnold, Gerard Awanou and Ragnar Winther}

\address{School of Mathematics, University of Minnesota, Minneapolis,
Minnesota 55455}
\email{arnold@umn.edu}
\urladdr{http://umn.edu/\~{}arnold}

\address{Department of Mathematics, Statistics, and Computer Science, M/C
249,
University of Illinois at Chicago,
Chicago, IL 60607-7045, USA}
\email{awanou@uic.edu}
\urladdr{http://www.math.uic.edu/\~{}awanou}

\address{Centre of Mathematics for Applications and Department of
Informatics, University of Oslo, P.O. Box 1053, Blindern, 0316 Oslo, Norway}
\email{ragnar.winther@cma.uio.no}
\urladdr{http://folk.uio.no/\char'176rwinther}

\keywords{mixed method, finite element, linear elasticity, nonconforming.}

\subjclass[2000]{Primary: 65N30, Secondary: 74S05}

\begin{abstract}
This paper presents a nonconforming finite element
approximation of  the space of symmetric tensors
with square integrable divergence, on tetrahedral meshes.  Used for
stress approximation together with 
the full space of piecewise linear vector fields for displacement, this
gives a stable mixed finite element method which is shown to be linearly
convergent for both the stress and displacement, and which is significantly
simpler than any stable conforming mixed finite element method.  The
method may be viewed as the three-dimensional analogue
of a previously developed element in two dimensions.  As in that case, a
variant of the method is proposed as well, in which the displacement
approximation is reduced to piecewise rigid motions and the stress space
is reduced accordingly, but the linear convergence is retained.
\end{abstract}

\maketitle

\section{Introduction}
Mixed finite element methods for elasticity simultaneously approximate the displacement
vector field and the stress tensor field.  Conforming methods based on the classical
Hellinger--Reissner variational formulation require a finite element space for the
stress tensor that is contained in $H(\div,\Omega;\Sym)$, the space of symmetric $n\times n$ tensor
fields which are square integrable with square integrable divergence.
For a stable method, this stress space must be compatible with the finite element space used for the displacement,
which is a subspace of the vector-valued $L^2$ function space.  It has proven difficult to devise
such pairs of spaces.  While some stable pairs have been successfully constructed in both $2$
and $3$ dimensions, the resulting elements tend to be quite complicated, especially
in $3$ dimensions.  For this reason, much attention has been paid to constructing elements
which fulfill desired stability, consistency, and convergence conditions, but
which relax the requirement that the stress space be contained in $H(\div,\Omega;\Sym)$ in
one of two ways: either by relaxing the interelement continuity requirements, which leads
to \emph{nonconforming} mixed finite elements, or
by relaxing the symmetry requirement, which leads to mixed finite elements with \emph{weak symmetry}.
In this paper we construct a new nonconforming mixed finite element for elasticity
in three dimensions based on tetrahedral meshes, analogous to
a two-dimensional element defined in \cite{Arnold2003}.
The space $\Sigma_K$ of shape functions
on a tetrahedral element $K$ (which is defined in \eqref{defS} below)
is a subspace of the space $\P_2(K;\Sym)$, the space of symmetric
tensors with components which are polynomials of degree at most $2$.
It contains $\P_1(K;\Sym)$ and has dimension 42.  The degrees of freedom for $\sigma\in\Sigma_K$
are the integral of $\sigma$ over $K$ (this is six degrees of freedom, since $\sigma$
has six components), and the integral and linear moments of $\sigma n$ on each face of $K$
(nine degrees of freedom per face).  For the displacements we simply take
$\P_1(K,\R^3)$ as the shape functions and use only interior degrees of freedom so
as not to impose any interelement degrees of freedom. See the element diagrams
in Figure~\ref{fg:dofs}.  We note that, since there are no degrees of freedom
associated to vertices or edges, only to faces and the interior, our elements may
be implemented through hybridization, which may simplify the implementation.
See \cite{minc} for the general idea, or \cite{Guzman2010} for a case close to
the present one.

\begin{figure}[ht]
\centerline{\includegraphics[width=40mm]{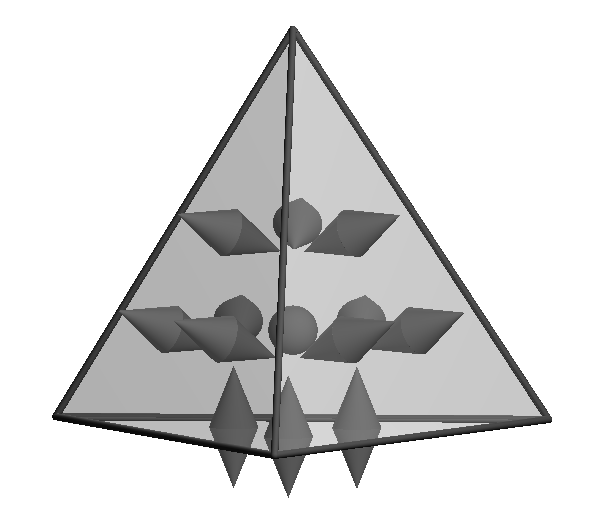}\quad
\includegraphics[width=40mm]{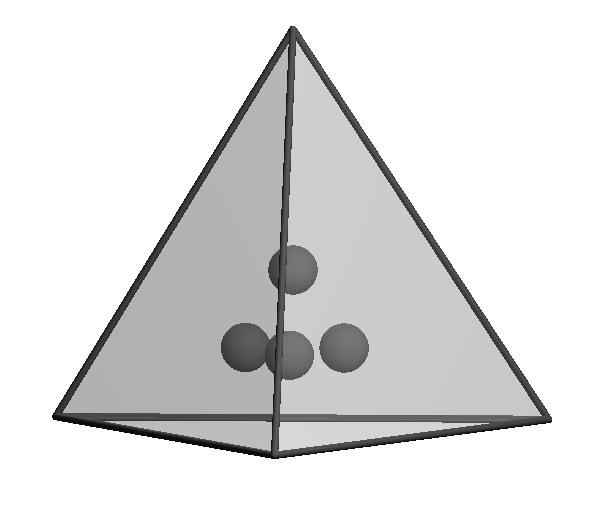}}
\caption{\label{fg:dofs}Degrees of freedom for the stress $\sigma$ (left) and displacement $u$
(right).
The arrows represent moments of $\sigma n$, which has three components,
and so there are $9$ degrees of freedom associated to each face.  The interior
degrees of freedom are the integrals of $\sigma$ and $u$, which have 6 and 3 components,
respectively.}
\end{figure}

After some preliminaries in section~\ref{prelim},
in section~\ref{elts} we define the shape function space $\Sigma_K$ and
prove unisolvence of the degrees of freedom.  In section~\ref{errors} we establish
the stability, consistency, and
convergence of the resulting mixed method.  Finally
in section~\ref{reduced} we describe a variant of the method which reduces the displacement
space to the space of piecewise rigid motions and reduces the stress space accordingly.
The results of this paper were announced in \cite{Awanou10b}.

As mentioned, conforming mixed finite elements for elasticity tend to be quite complicated.
The earliest elements, which worked only in two dimensions, used composite elements
for stress \cite{Johnson1978,Arnold1984}.  Much more recently, elements using
polynomial shape functions were developed for simplicial meshes in two \cite{Arnold2002}
and three dimensions \cite{Adams2005,Arnold2008} and for rectangular meshes \cite{Arnold2005,Chen2010}.
Heuristics given in \cite{Arnold2002} and \cite{Arnold2008} 
indicate that it is not possible to construct significantly
simpler elements with polynomial shape functions and which preserve
both the conformity and symmetry of the stress.  Many authors have
developed mixed elements with weak symmetry
\cite{Amara1979,Arnold1984a,Stenberg1986,Stenberg1988,Stenberg1988a,Morley1989,Arnold2006,Arnold2007,Falk08,Guzman10c,Guzman10d,Guzman2012},
which we will not pursue here.
For nonconforming methods with strong symmetry, which is the subject of this
paper, there have been several elements proposed for rectangular meshes
\cite{Yi2005,Yi2006,Hu2007/08,Awanou2009,Man2009}, but very little work
on simplicial meshes.  A two-dimensional nonconforming element of low degree
was developed by two of the present authors in \cite{Arnold2003}.  As shape functions for
stress it uses a $15$ dimensional subspace of the space of all quadratic symmetric tensors,
while for the displacement it uses piecewise linear vector fields.
A second element was also introduced in \cite{Arnold2003}, for which the
stress shape function space was reduced to dimension $12$ and the displacement
functions reduced to the piecewise rigid motions.
In \cite{Guzman2010} Gopalakrishnan and Guzm\'an developed
a family of simplicial elements,
in both two and three dimensions.  As shape functions they used the space of
all symmetric tensors of polynomial degree at most $k+1$, paired with
piecewise polynomial vector fields of dimension $k$, for $k\ge 1$.
Thus, in two dimensions and in the lowest degree case, they use an $18$ dimensional
space of shape functions for stress, while in three dimensions, the space
has dimension $60$.  Gopalakrishnan and Guzm\'an also proposed a reduced
variant of their space, in which the displacement space remains the full
space of piecewise polynomials of degree $k$, but the dimension of the
stress space is reduced to $15$ in two dimensions and to $42$ in three dimensions.
However, their reduced spaces have a drawback, in that they are not uniquely
defined, but for each edge of the triangulation require a choice of
a favored endpoint of the edge.  In particular, in two dimensions, the reduced space
of \cite{Guzman2010} uses the same displacement space as the non-reduced space
of \cite{Arnold2003}, uses a stress space of the same dimension, and uses
identical degrees of freedom, but the two spaces do \emph{not} coincide (since
the space of \cite{Arnold2003} does not require a choice of favored edge endpoints).

The elements introduced here may be regarded as the three-dimensional analogue of the
element in \cite{Arnold2003}.  Again, they have the same displacement space and the
same degrees of freedom as the reduced three-dimensional elements of \cite{Guzman2010},
but the stress spaces do not coincide.  Also, as in the two-dimensional case, our reduced
space is of lower dimension than any that has been heretofore proposed.

\section{Preliminaries} \label{prelim}
Let $\Omega\subset\R^3$ be a bounded domain.  We denote by $\Sym$ 
the space of $3 \times 3$ symmetric matrices and by $L^2(\Omega;\R^3)$
and $L^2(\Omega;\Sym)$ the space of square-integrable vector fields
and symmetric matrix fields on $\Omega$, respectively.  The space $H(\div,\Omega;\Sym)$
consists of matrix fields $\tau\in L^2(\Omega;\Sym)$ with row-wise divergence,
$\div\tau$, in $L^2(\Omega;\R^3)$.
The Hellinger--Reissner variational formulation 
seeks $(\sigma,u) \in H(\div,\Omega;\Sym) \times L^2(\Omega; \R^3)$ 
such that
\begin{equation}\label{ncf}
\begin{aligned}
\int_{\Omega} (A \sigma : \tau + \div \tau \cdot u) \ud x & = 0, &&\tau \in H(\div,\Omega;\Sym) \\
\int_{\Omega} \div \sigma \cdot v \ud x & = \int_{\Omega} f \cdot v \ud x,  &&v \in L^2(\Omega; 
\R^n).
\end{aligned}
\end{equation}
Here $\sigma:\tau$ denotes the Frobenius inner products of matrices $\sigma$ and $\tau$,
and $A=A(x): \Sym \to \Sym$ denotes the compliance tensor, a linear operator which
is bounded and symmetric
positive definite uniformly for $x \in \Omega$.  The solution $u$ solves the Dirichlet
problem for the Lam\'e equations and so belongs to $\ring H^1(\Omega;\R^3)$.  If the
domain $\Omega$ is smooth and the compliance tensor $A$ is
smooth, then $(\sigma,u)\in H^1(\Omega;\Sym) \times H^2(\Omega;\R^3)$ and 
\begin{equation}\label{regularity}
\| \sigma \|_1 + \| u \|_2 \le c \|f \|_0.
\end{equation}
with a constant $c$ depending on $\Omega$ and $A$.  The same regularity holds
if the domain is a convex polyhedron, at least in the isotropic homogeneous
case.  See \cite{Nicaise}.

We shall also use spaces of the form $H^k(\Omega;X)$ where $X$ is a finite-dimensional
vector space and $k$ is a nonnegative integer, the Sobolev space of functions
$\Omega\to X$ for which all derivatives of order at most $k$ are
square integrable.  The norm is denoted by $\|\cdot\|_{\Omega,k}$ or $\|\cdot\|_k$.

To discretize \eqref{ncf}, we choose finite-dimensional
subspaces $\Sigma_h\subset L^2(\Omega;\Sym)$ and $V_h\subset L^2(\Omega;\R^3)$.  Assuming
that $\Sigma_h$ consists of matrix fields which are piecewise polynomial with respect to some
mesh $\T_h$ of $\Omega$, we define $\div_h\tau\in L^2(\Omega;\R^3)$
by applying the (row-wise) divergence operator piecewise.  
A mixed finite element approximation of \eqref{ncf} is then obtained by seeking $(\sigma_h,u_h) \in \Sigma_h 
\times V_h$ such that:
\begin{equation}\label{ncfD}
\begin{aligned} 
\int_{\Omega} (A \sigma_h : \tau + \div_h \tau  \cdot u_h )\ud x & = 0, 
&&\tau \in \Sigma_h \\
\int_{\Omega} \div_h \sigma_h \cdot v\ud x & = \int_{\Omega} f \cdot v_h \ud x, &&v \in V_h.
\end{aligned}
\end{equation}
If $\Sigma_h\subset H(\div,\Omega;\Sym)$ this is a conforming method, otherwise, as for the elements
developed below, it is nonconforming.  We recall that a piecewise smooth matrix field $\tau$ belongs
to $H(\div)$ if and only if whenever two tetrahedra in $\T_h$ meet in a common face,
the jump $\jump{\tau n}$ of the normal components $\tau n$ across the face vanish.

\section{Definition of the new elements} \label{elts}
We define the finite element spaces $\Sigma_h$ and $V_h$ in the usual way, by
specifying spaces of shape functions and degrees of freedom.  The space $V_h$ 
is simply the space of all piecewise linear vector fields with respect
to the given tetrahedral mesh $\T_h$ of $\Omega$ (which we therefore assume is
polyhedral).  Thus the shape function space
on an element $K\in\T_h$ is simply $V_K=\P_1(K;\R^3)$, the space of polynomial vector fields
on $K$ of degree at most $1$.  For degrees of freedom we choose the moments
$v\mapsto \int_K v\cdot w\ud x$ with weights $w\in V_K$.  Since no degrees of freedom
are associated with the proper subsimplices of $K$, no interelement continuity
is imposed on $V_h$.  The associated
projection $P_h:L^2(\Omega;\R^3)\to V_h$ is the $L^2$ projection.

To define the space $\Sigma_h$ we introduce some notation.  If $u$ is a unit vector,
let $Q_u:\R^3\to u^\perp$ be the orthogonal projection onto the plane orthogonal
to $u$.  Then $Q_u$ is given by the symmetric matrix $I-uu'$.
For a tetrahedron $K$, let $\Delta_k(K)$
denote the subsimplices of dimension $k$ (vertices, edges, faces, and
tetrahedra) of $K$.  For an edge $e\in \Delta_1(K)$ let $s_e$ be a unit vector parallel
to $e$ and, for a face $f\in \Delta_2(K)$, let $n_f$ be its outward unit normal.
We can then define the shape function space
\begin{align}\label{defS}
\Sigma_K
&= \{\, \sigma \in \mathcal{P}_2(K;\mathbb{S}) \ | \
Q_{s_e} \sigma Q_{s_e}|_e \in \P_1(e;\Sym)\,\forall e \in \Delta_1(K) \,\}.
\end{align}
For $\sigma\in\mathcal{P}_2(K;\mathbb{S})$, $Q_{s_e} \sigma Q_{s_e}|_e$ is a quadratic polynomial on $e$ taking
values in the $3$-dimensional subspace $Q_{s_e}\Sym Q_{s_e}$ of $\Sym$.  As illustration, for $s_e=(0,0,1)'$ and $\sigma=(\sigma_{ij})_{i,j=1,\ldots,3} \in \mathbb{S}$, we have
$$
Q_{s_e} \sigma Q_{s_e} = \begin{pmatrix}\sigma_{11} & \sigma_{12} & 0 \\ \sigma_{12} & \sigma_{22} & 0 \\ 0 & 0 & 0\end{pmatrix}.
$$
Thus the requirement
that $Q_{s_e} \sigma Q_{s_e}|_e$ belong to $\P_1$ represents $3$ linear constraints on $\sigma$, and so
$\dim \Sigma_K \geq 60 - 3 \x 6 = 42$.
We shall now specify $42$ degrees of freedom (linear functionals) and show unisolvence, i.e., that if all the degrees of freedom
vanish for some $\sigma\in \Sigma_K$, then $\sigma$ vanishes.  This will imply that $\dim\Sigma_K\le 42$,
and so the dimension is exactly $42$.

The degrees of freedom we take are:
\begin{align}\label{dof1}
&\int_f \sigma n_f\cdot v\ud s, \quad v\in \P_1(f;\R^3),\ f\in\Delta_2(K),  &&\text{($36$ degrees of freedom)},
\\
\label{dof2}
&\int_K \sigma\ud x,  &&\text{($6$ degrees of freedom)}.
\end{align}

The following lemma will be used in the proof of unisolvence.
\begin{lemma} \label{lemT}
Let $f_i$ and $f_j$ be the faces of $K$ opposite two distinct vertices $v_i$ and $v_j$
and let $e$ be their common edge, with endpoints $v_k$ and $v_l$.  Given $\beta,\gamma\in\R$,
there exists a unique $p\in\P_2(K)$ satisfying the following four conditions (see Figure~\ref{fg:tetlemma}):
\begin{enumerate}
 \item $p|_e \in \P_1(e)$,
 \item $p(v_k)=\beta$, \ $p(v_l)=\gamma$,
 \item $p|_{f_i}\perp_{L^2} \P_1(f_i)$, $p_{f_j}\perp_{L^2} \P_1(f_j)$,
 \item $\int_K p\ud x=0$.
\end{enumerate}
Moreover $p(v_i)=p(v_j)=3(\beta+\gamma)/2$.
\end{lemma}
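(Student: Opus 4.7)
The plan is to reduce the problem to a homogeneous uniqueness statement via a dimension count, establish that uniqueness directly using the structure of the conditions, and finally extract the values at $v_i, v_j$ by expanding in barycentric coordinates. The four conditions amount to at most $1 + 2 + 6 + 1 = 10$ linear constraints on $\P_2(K)$, a space of dimension $10$; hence the map sending $p$ to the tuple of these functionals is a linear map between $10$-dimensional spaces, and by rank--nullity it suffices to show that the only $p \in \P_2(K)$ satisfying all four conditions with $\beta = \gamma = 0$ is $p = 0$.

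To prove this, assume $\beta = \gamma = 0$. By (1) and (2), $p|_e$ is linear and vanishes at both endpoints of $e$, so $p|_e \equiv 0$. Thus $p|_{f_i}$ vanishes along $e \subset f_i$, and in the barycentric coordinates on $f_i$ we may factor $p|_{f_i} = \mu_j L$, where $\mu_j$ is the barycentric coordinate on $f_i$ of the vertex $v_j$ (the only vertex of $f_i$ not on $e$) and $L \in \P_1(f_i)$. Condition (3) on $f_i$ then reads
\[
\int_{f_i} \mu_j L q \ud s = 0, \qquad q \in \P_1(f_i).
\]
The symmetric bilinear form $(L, q) \mapsto \int_{f_i} \mu_j L q \ud s$ on $\P_1(f_i) \times \P_1(f_i)$ is nondegenerate: after an affine change of variables to a fixed reference triangle, its Gram matrix in the barycentric basis can be computed explicitly from the standard barycentric integration formula and verified to have nonzero determinant. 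Hence $L = 0$, so $p|_{f_i} = 0$, and symmetrically $p|_{f_j} = 0$. Therefore both barycentric coordinates $\lambda_i$ and $\lambda_j$ of $K$ divide $p$, and since $\deg p = 2$, we must have $p = c\,\lambda_i \lambda_j$ for some scalar $c$. Condition (4) then gives $c \int_K \lambda_i \lambda_j \ud x = 0$, and since this integral is positive, $c = 0$.

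For the \emph{moreover} statement, I would expand the unique $p$ as $p = \sum_{a \le b} c_{ab} \lambda_a \lambda_b$ in the barycentric monomial basis of $K$, so that $p(v_m) = c_{mm}$ for each vertex $v_m$. Condition (2) gives $c_{kk} = \beta$, $c_{ll} = \gamma$, and the linearity condition (1) forces the coefficient of $\lambda_k \lambda_l$ in $p|_e$ to vanish, i.e., $c_{kl} = c_{kk} + c_{ll} = \beta + \gamma$. Testing condition (3) on $f_i$ against the three barycentric coordinate functions on $f_i$ yields three linear equations in $c_{jj}, c_{jk}, c_{jl}$; using the two-dimensional barycentric integration formula to evaluate the coefficients and solving the resulting $3 \times 3$ system gives $c_{jj} = 3(\beta + \gamma)/2$. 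The analogous computation on $f_j$ yields $c_{ii} = 3(\beta + \gamma)/2$.

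The main obstacle is the nondegeneracy of the bilinear form in the uniqueness step; this is not conceptually deep but does require an explicit computation on a single reference triangle, which suffices because a nondegenerate affine change of coordinates preserves both $\P_1$ and the nondegeneracy of the pairing. The remainder of the argument is essentially bookkeeping in barycentric coordinates.
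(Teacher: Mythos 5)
Your proof is correct, and the uniqueness step is essentially the paper's: vanish on $e$ by (1)--(2), then on $f_i$ and $f_j$ by (3), so $p=c\lambda_i\lambda_j$, and (4) kills $c$. (One simplification: the nondegeneracy of $(L,q)\mapsto\int_{f_i}\mu_j Lq\ud s$ needs no Gram determinant --- take $q=L$ and use $\mu_j>0$ in the interior of $f_i$, so the form is positive definite.) Where you genuinely diverge is in existence and the ``moreover'' clause: the paper simply exhibits an explicit formula for $p$ in barycentric coordinates and verifies (1)--(4) with the barycentric integration formula, after which $p(v_i)=p(v_j)=\tfrac32(\beta+\gamma)$ is read off as the coefficients of $\lambda_i^2,\lambda_j^2$. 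You instead get existence from a rank--nullity count (the conditions are $1+2+6+1=10$ linear functionals on the $10$-dimensional space $\P_2(K)$, and uniqueness gives injectivity, hence bijectivity), and you extract $c_{jj}=c_{ii}=\tfrac32(\beta+\gamma)$ by solving the $3\times3$ system that (3) imposes on $c_{jj},c_{jk},c_{jl}$ once $c_{kk}=\beta$, $c_{ll}=\gamma$, $c_{kl}=\beta+\gamma$ are fixed; I checked that this system (whose matrix is exactly the nondegenerate Gram matrix from your uniqueness step) does yield $c_{jj}=\tfrac32(\beta+\gamma)$, consistent with the paper's formula. The trade-off: your route avoids having to guess and verify an explicit polynomial, at the cost of two small computations you only sketch (the nondegeneracy, which is trivial by the positive-definiteness remark above, and the $3\times3$ solve), while the paper's explicit formula makes both existence and the vertex values immediate once one is willing to verify it term by term.
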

\begin{figure}[ht]
\centerline{\includegraphics[width=70mm]{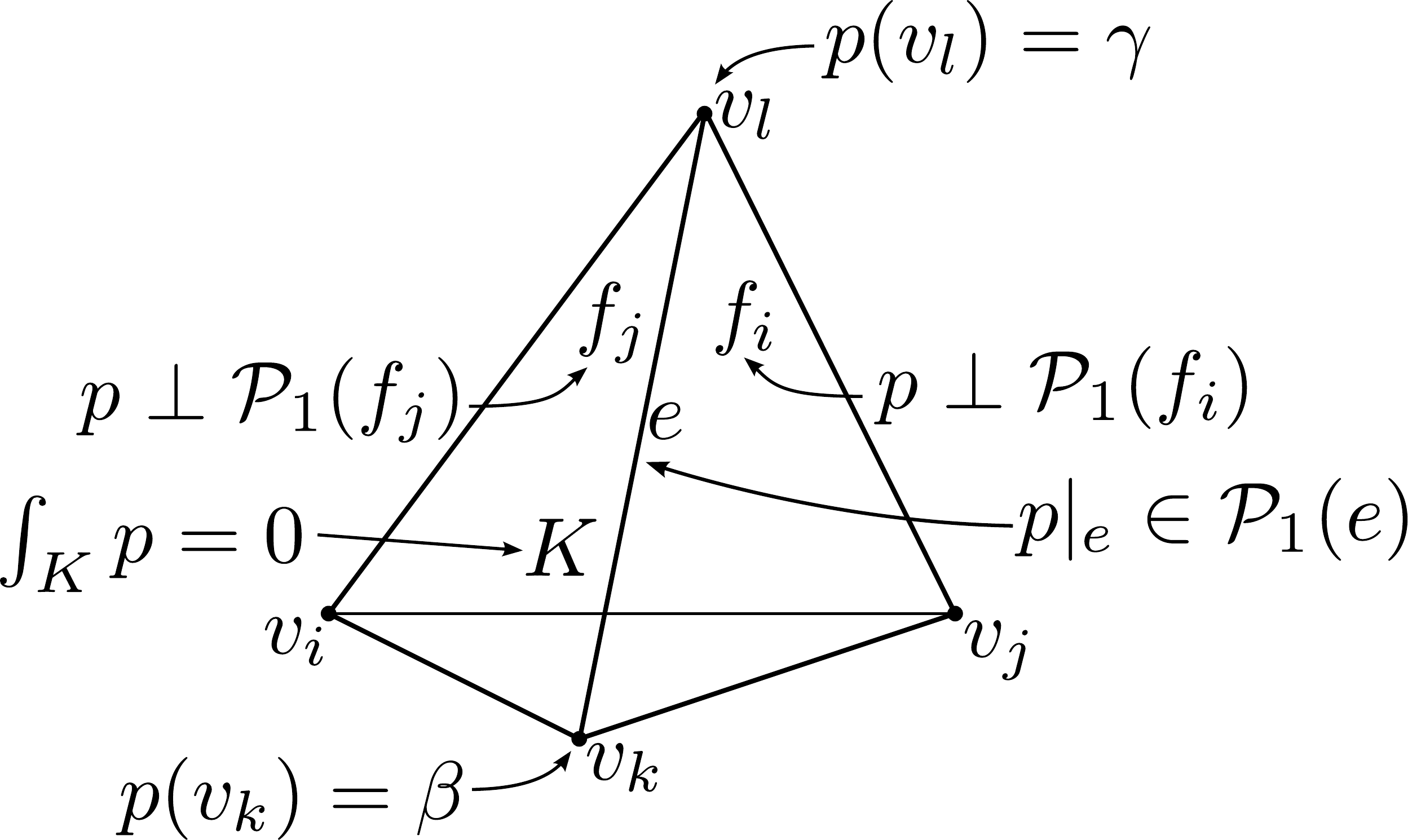}}
\caption{\label{fg:tetlemma}The conditions of Lemma~\ref{lemT}.}
\end{figure}

\begin{proof}
 For uniqueness we must show that if $p\in\P_2(K)$ satisfies (1)--(4) with $\beta=\gamma=0$, then $p$ vanishes.
Certainly, from (1) and (2), $p$ vanishes on $e$, and then, using (3), $p$ vanishes on $f_i$ and $f_j$.
Therefore $p=c\lambda_i\lambda_j$ where $\lambda_i\in\P_1(K)$ is the barycentric coordinate
function equal to $0$ on $f_i$ and $1$ at $v_i$, similarly for $\lambda_j$, and $c$ is a constant.
Integrating this equation over $K$ and invoking (4) we conclude that $p$ does indeed vanish.

To show the existence of $p\in\P_2(K)$, we simply exhibit its formula
in terms of barycentric coordinates:
\begin{multline*}
p  = \beta \lambda_k^2 + (\beta+\gamma)\lambda_k\lambda_l
+ \gamma \lambda_l^2 + \frac{3}{2}(\beta+\gamma)( \lambda_i^2+\lambda_j^2) \\
+(-5 \beta - \gamma) (\lambda_i+\lambda_j)\lambda_k + (-\beta-5\gamma)
(\lambda_i+\lambda_j) \lambda_l + 3(\beta+\gamma)\lambda_i \lambda_j.
\end{multline*}
That this function satisfies (1)--(4)
follows from the elementary formula
\begin{equation*}
 \int_T \lambda^\alpha = \frac{\alpha_1!\cdots\alpha_{d+1}!d!}{(|\alpha|+d)!}|T|, \quad \alpha\in\mathbb N_0^{d+1},
\end{equation*}
for the
integral of a barycentric monomial over a simplex $T$ of dimension $d$,
which can be established by induction (see, e.g., \cite{Eisenberg-Malvern}).
\end{proof}

We are now ready to prove the claimed unisolvence result.
\begin{thm} \label{HdivU}
The degrees of freedom given by \eqref{dof1} and \eqref{dof2}
are unisolvent for the shape function space $\Sigma_K$ defined by \eqref{defS}:
if the degrees of freedom all vanish for some $\sigma\in\Sigma_K$, then $\sigma=0$.
\end{thm}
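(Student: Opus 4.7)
The plan is to suppose $\sigma\in\Sigma_K$ has all $42$ degrees of freedom vanishing and show $\sigma=0$. Setting $p_{ij}:=n_i\cdot\sigma n_j\in\P_2(K)$ for $i,j\in\{1,2,3,4\}$, I note that since $\{n_i\}$ spans $\R^3$ the map $\sigma\mapsto(p_{ij})$ is injective, so it suffices to prove every $p_{ij}$ vanishes identically. My first step would be to show $p_{ii}|_{f_i}=0$ for each $i$: on any edge $e$ of $f_i$ one has $n_i\in s_e^\perp$, so the defining edge condition of $\Sigma_K$ gives $p_{ii}|_e\in\P_1(e)$. A quadratic on a triangle linear on all three edges is linear on the triangle, whence $p_{ii}|_{f_i}\in\P_1(f_i)$, and testing the face dof on $f_i$ with $v=v_0 n_i$, $v_0\in\P_1(f_i)$, then forces $p_{ii}|_{f_i}=0$. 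In particular $\sigma(v_m)n_i\cdot n_i=0$ for every $m\ne i$.

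Second, for each pair $i\ne j$ I would verify the hypotheses of Lemma~\ref{lemT} for $p=p_{ij}$ with the face pair $\{f_i,f_j\}$ whose common edge $e_{kl}$ has endpoints $v_k,v_l$ (where $\{k,l\}=\{1,2,3,4\}\setminus\{i,j\}$): condition (1) is the edge condition since $n_i,n_j\in s_{e_{kl}}^\perp$; condition (3) follows by testing the $f_i$ (resp.\ $f_j$) dof with weight $v_0 n_j$ (resp.\ $v_0 n_i$); condition (4) is the interior dof. Lemma~\ref{lemT} then determines $p_{ij}$ uniquely from its values $p_{ij}(v_k)$, $p_{ij}(v_l)$ and yields the essential identity
\[
p_{ij}(v_i)=p_{ij}(v_j)=\tfrac{3}{2}\bigl(p_{ij}(v_k)+p_{ij}(v_l)\bigr).
\]

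The heart of the proof, and the step I expect to be the main obstacle, is to show $\sigma(v_m)=0$ at each vertex $v_m$. Since $\{n_i:i\ne m\}$ is a basis of $\R^3$, it suffices to check $p_{ij}(v_m)=0$ for all $i,j\in\{1,\ldots,4\}\setminus\{m\}$; the diagonal cases are covered by the first step. For the three off-diagonals I plan to combine, for each $j\ne m$, the identities from Lemma~\ref{lemT} for the pair $\{m,j\}$ with the analogous identities for the three pairs $\{j,j'\}\subset\{1,\ldots,4\}\setminus\{m\}$, using the latter to substitute away the $\sigma$-values at vertices other than $v_m$. The resulting bookkeeping should produce three independent linear relations of the form $p_{ij}(v_m)+p_{ij'}(v_m)=0$ for distinct pairs $\{i,j\},\{i,j'\}\subset\{1,\ldots,4\}\setminus\{m\}$ sharing a common index; together these will force all three unknowns to vanish.

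Finally, with $\sigma$ vanishing at every vertex, applying the uniqueness in Lemma~\ref{lemT} with $\beta=\gamma=0$ gives $p_{ij}\equiv 0$ for all $i\ne j$. The four outward normals satisfy $\sum_m|f_m|n_m=0$ with all coefficients nonzero, so $\sum_m|f_m|p_{mi}\equiv 0$ identically; combined with $p_{mi}\equiv 0$ for $m\ne i$ this forces $p_{ii}\equiv 0$ as well, and the observation that $\{n_i\}$ spans $\R^3$ completes the proof that $\sigma=0$.
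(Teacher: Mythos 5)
Your steps 1, 2, and the concluding step essentially reproduce the paper's argument (with unit normals $n_i$ in place of the barycentric gradients $g_i=\grad\lambda_i$, an immaterial rescaling since $|f_i|h_i=3|K|$): the diagonal components $p_{ii}$ vanish on $f_i$, each off-diagonal $p_{ij}$ satisfies the hypotheses of Lemma~\ref{lemT}, and once all vertex values of the off-diagonal components are known to vanish, the lemma's uniqueness finishes the proof. The problem is that the step you yourself flag as the heart of the matter is not actually carried out, and the way you propose to carry it out would not work as described. The Lemma~\ref{lemT} identities by themselves relate, for each fixed pair $\{i,j\}$, the values of $p_{ij}$ at $v_i,v_j$ to its values at $v_k,v_l$; they impose no relation at all between different components $p_{ij}$, $p_{ij'}$, so no amount of substitution among them can force anything to vanish. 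The only source of coupling is the algebraic identity $\sum_m|f_m|n_m=0$ (equivalently $\sum_m g_m=0$) combined with $p_{ii}|_{f_i}=0$, which you invoke only at the very end and not in this step. Evaluating that relation at a vertex $v_k\in f_i$ and inserting the lemma's identity produces equations of the form \eqref{e}, namely $3\bigl(\beta_{lj}+\beta_{jl}\bigr)+2\bigl(\beta_{lk}+\beta_{jk}\bigr)=0$, each of which mixes vertex values at \emph{three different} vertices. Hence the problem does not decouple vertex by vertex, and the hoped-for relations $p_{ij}(v_m)+p_{ij'}(v_m)=0$ are not obtainable by local bookkeeping; they are only known a posteriori, after the full system is solved.

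What is actually required is to assemble the twelve equations \eqref{e} for the twelve vertex values $\beta_{lk}$ into a single homogeneous $12\times12$ system and prove it nonsingular. This is not a formality: the paper computes the determinant to be $16(2a-b)^2b^6(a+b)^4$ (with a computer algebra package), and the conclusion hinges on the precise constant $3/2$ delivered by Lemma~\ref{lemT} --- had the lemma produced the factor $1/2$ (i.e.\ $2a=b$) the system would be singular and the degrees of freedom would fail to be unisolvent. So your proposal has the right framework and correctly verifies the hypotheses of Lemma~\ref{lemT}, but the decisive linear-algebra step is missing, and the route you sketch for it (per-vertex elimination using only the lemma's identities) cannot succeed without bringing in the trace relation and then facing the same global $12\times12$ nonsingularity question the paper settles by direct computation.
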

\begin{proof}
Let $g_i=\grad \lambda_i$ be the gradient of the $i$th barycentric
coordinate function.  Thus $g_i$ is an inward normal vector to
the face $f_i$ with length $1/h_i$ where $h_i$ is the distance
from the $i$th vertex to $f_i$.  Note that any three of the $g_i$
form a basis for $\R^3$ and that $\sum_i g_i=0$.

For $\sigma\in\Sigma_K$, define $\sigma_{ij}=\sigma_{ji}=
g_i'\sigma g_j\in\P_2(K)$.  We shall show that if
$\sigma\in\Sigma_K$ and all the degrees of freedom vanish,
then $\sigma_{ij}\equiv0$ on $K$ for all $i\ne j$.  This is
sufficient, since, fixing $j$ and varying $i$, we conclude that
$\sigma g_j\equiv0$, and, then, since this holds for each $j$,
that $\sigma\equiv 0$.

If $e$
is an edge of the faces $f_i$ and $f_j$ of $K$, which may or may
not coincide, then $\sigma_{ij}=g_i'\sigma g_j=g_i'Q_s\sigma Q_s g_j$.
Thus, from the definition \eqref{defS} of the space $\Sigma_K$, 
$\sigma_{ij}$ is linear on $e$.  In particular,
$\sigma_{ii}$ is linear on each edge of $f_i$.  Thus $p:=\sigma_{ii}|_{f_i}$ is a quadratic polynomial on
$f_i$ whose restriction to each edge of $f_i$ is linear.  Therefore, on the boundary
of $f_i$, $p$ coincides with its linear interpolant, and, since
a quadratic function on a triangle is determined by its boundary values,
$p$ is linear.  Thus $\sigma_{ii}$ is actually a linear polynomial on $f_i$, and,
in view of the degrees of freedom \eqref{dof1}, we conclude that
$\sigma_{ii}$ vanishes on $f_i$.

For any pair $(l,k)$ of distinct indices (that is, $1\le
l,k\le 4$ and $l\ne k$), define
\begin{equation}\label{k}
\beta_{lk} = \sigma_{ij}(v_k), \quad \beta_{kl}=\sigma_{ij}(v_l),
\end{equation}
where $i,j$ are the two indices unequal to $l$ and $k$.  Now
$\sigma_{ij}\in\P_2(K)$ is linear on the common edge $e$ of $f_i$
and $f_j$, and, because of the vanishing degrees of freedom of
$\sigma$, $\sigma_{ij}$ is orthogonal to $\P_1$ on $f_i$ and on $f_j$ and
has integral $0$ on $K$.  Therefore, by Lemma~\ref{lemT} applied with $p=\sigma_{ij}$,
it is sufficient to show that $\beta_{lk}$ and $\beta_{kl}$ both vanish
in order to conclude that $\sigma_{ij}$ vanishes.  In fact,
we shall show that the 12 quantities $\beta_{lk}$, corresponding to
the 12 pairs of distinct indices, satisfy a nonsingular homogeneous
system of 12 equations, and so vanish.

The lemma also tells us that
$\sigma_{ij}(v_j)=3(\beta_{lk}+\beta_{kl})/2$.  Interchanging $j$
and $k$ gives
\begin{equation*}
\sigma_{ik}(v_k)=\frac{3}{2}(\beta_{lj}+\beta_{jl}).
\end{equation*}
Also, by definition,
\begin{equation}\label{l}
\beta_{jk} = \sigma_{il}(v_k).
\end{equation}
Combining \eqref{k}--\eqref{l} gives
$$
\sigma_{ij}(v_k) + \sigma_{ik}(v_k) + \sigma_{il}(v_k)
 =\frac32(\beta_{lj}+\beta_{jl})+(\beta_{lk}+\beta_{jk}).
$$
But $\sigma_{ij} + \sigma_{ik} + \sigma_{il}=-\sigma_{ii}$, which
vanishes on $f_i$ and so, in particular, at the vertex $v_k$.
Thus we have established the equation
\begin{equation}\label{e}
a(\beta_{lj}+\beta_{jl})+b(\beta_{lk}+\beta_{jk})=0,
\end{equation}
where $a=3$, $b=2$.

For each of the 12 pairs $(i,k)$ of distinct indices, we let $j$ and $l$ be the remaining indices and
consider the equation \eqref{e}.  In this way we obtain a system of
12 linear equations in
12 unknowns. If we number the pairs of distinct indices lexographically,
the matrix of the system is:
$$
\left(
\begin{array}{cccccccccccc}
 \0 & \0 & \0 & \0 & \0 & \0 & \0 & b & a & \0 & b & a \\
 \0 & \0 & \0 & \0 & b & a & \0 & \0 & \0 & \0 & a & b \\
 \0 & \0 & \0 & \0 & a & b & \0 & a & b & \0 & \0 & \0 \\
 \0 & \0 & \0 & \0 & \0 & \0 & b & \0 & a & b & \0 & a \\
 \0 & b & a & \0 & \0 & \0 & \0 & \0 & \0 & a & \0 & b \\
 \0 & a & b & \0 & \0 & \0 & a & \0 & b & \0 & \0 & \0 \\
 \0 & \0 & \0 & b & \0 & a & \0 & \0 & \0 & b & a & \0 \\
 b & \0 & a & \0 & \0 & \0 & \0 & \0 & \0 & a & b & \0 \\
 a & \0 & b & a & \0 & b & \0 & \0 & \0 & \0 & \0 & \0 \\
 \0 & \0 & \0 & b & a & \0 & b & a & \0 & \0 & \0 & \0 \\
 b & a & \0 & \0 & \0 & \0 & a & b & \0 & \0 & \0 & \0 \\
 a & b & \0 & a & b & \0 & \0 & \0 & \0 & \0 & \0 & \0
\end{array}\right).
$$
Its determinant is $16(2a-b)^2b^6(a+b)^4$, as may be verified with a computer algebra package.
In particular, when $a=3$, $b=2$, the system is nonsingular.                                                              
Thus all the $\beta_{ij}$ vanish as claimed, and the proof is complete.
\end{proof}

Having established unisolvency, the assembled finite element space $\Sigma_h$ is
defined as the set of all matrix fields $\tau$ such that $\tau|_K\in\Sigma_K$ for
all $K\in\T_h$ and for which the degrees of freedom \eqref{dof1} have a 
common value when a face $f$ is shared by two tetrahedra in $\T_h$.  If $\tau\in\Sigma_h$,
then the jump $\jump{\tau n_f}$ of $\tau n_f$ across such an interior face $f$ need not vanish,
but it is orthogonal to $\P_1(f;\R^3)$.  The normal component $\jump{n_f'\tau n_f}$
is, by the definition of the shape function space, linear on each edge of $f$ so
belongs to $\P_1(f)$, and thus
\begin{equation}\label{jump}
\jump{n_f'\tau n_f}=0 \quad\text{on $f$},
\end{equation}
for any interior face of the triangulation.

\section{Error analysis} \label{errors}
In this section, we show that the pair of spaces $\Sigma_h$, $V_h$ give a convergent finite
element method.
The argument follows the one given in \cite{Arnold2003} for the two-dimensional case.
As usual, we suppose that we are given a sequence of tetrahedral meshes $\T_h$
indexed by a parameter $h$ which decreases to zero and represents the maximum
tetrahedron diameter.  We assume that the sequence is shape regular (the ratio
of the diameter of a tetrahedron to the diameter of its inscribed ball is bounded),
and the constants $c$ which appear in the estimates below may depend on this bound,
but are otherwise independent of $h$.

We start by observing that, by construction,
\begin{equation}\label{inclusion}
\div_h \Sigma_h \subset V_h.
\end{equation}
The degrees of freedom determine
an interpolation operator $\Pi_h : H^1 (\Omega;\Sym) \rightarrow \Sigma_h$ by
\begin{align*}
 \int_f (\Pi_h \tau - \tau) n \cdot \ v  \, \ud s & = 0, \quad 
v \in \mathcal{P}_1 (f), \ f\in\Delta_1(\T_h),\\
\int_K  (\Pi_h \tau - \tau )\ud x & = 0, \quad K\in\T_h, 
\end{align*}
where $\Delta_k(\T_h)=\bigcup_{K\in\T_h}\Delta_k(K)$.
Since
$$
\int_K (\div \Pi_h \tau - \div \tau) \cdot v \ud x = - \int_K
(\Pi_h \tau - \tau ) : \epsilon(v) \ud x + \int_{\partial K} (\Pi_h \tau - \tau )
n \cdot v \ud s =0,
$$
for $\tau \in H^1 (K; \Sym )$, $v \in V_K$, $K \in \T_h$,  we
have the commutativity property
\begin{equation}\label{commut}
\div_h \Pi_h \tau = P_h \div\tau,\quad \tau\in H^1(\Omega;\Sym).
\end{equation}
Since $\div$ maps $H^1(\Omega;\Sym)$ onto $L^2(\Omega;\R^3)$, \eqref{commut}
implies that $\div_h$ maps $\Sigma_h$ onto $V_h$.  An immediate consequence
is that the finite element method system \eqref{ncfD} is nonsingular.
Indeed, if $f=0$, then the choice of test functions $\tau=\sigma_h$ and
$v=u_h$ implies that $\sigma_h\equiv 0$ and then, choosing $\tau$ with $\div_h\tau=u_h$,
we get $u_h\equiv 0$.

For the error analysis we also need the approximation and boundedness
properties of the projections $P_h$ and $\Pi_h$.
Obviously, for the $L^2$ projection, we have
\begin{equation}\label{Papprox}
 \|v-P_hv\|_0\le c h^m\|v\|_m,\quad 0\le m\le 2.
\end{equation}
Since $\Pi_h$ is defined
element-by-element
and preserves piecewise linear matrix fields,
we may scale to a reference element of unit diameter using translation, rotation, and
dilation, and use a compactness argument, to obtain
\begin{equation}\label{bdpi0}
\|\tau - \Pi_h \tau\|_0 \leq c h^m \|\tau\|_m, \quad m=1,2,
\end{equation}
where the constant $c$ depends only on the shape regularity of the elements.
See, e.g., \cite{Arnold2002} for details.  Taking $m=1$ and using the triangle
inequality establishes $H^1$ boundedness of $\Pi_h$:
\begin{equation}\label{bdpi}
\| \Pi_h \tau\|_0 \leq c \|\tau\|_1. 
\end{equation}

The final ingredient we need for the convergence analysis is a bound on
the consistency error arising from the nonconformity of the elements.
Define
\begin{equation}\label{defce}
E_h(u,\tau) = \int_\Omega[\eps(u):\tau + \div_h\tau\cdot u]\ud x,\quad 
u\in \ring H^1(\Omega;\R^3),\ \tau\in\Sigma_h +H(\div,\Omega;\Sym).
\end{equation}
If $\tau\in H(\div,\Omega;\Sym)$, then $E_h(u,\tau)=0$, 
by integration by parts.  In general,
\begin{equation*}
E_h(u,\tau) = \sum_{K \in \T_h }
\int_{\partial K} \tau n_K  \cdot u \ud s= \sum_{f \in \Delta_2(\T_h)} \int_f \jump{\tau n_f}\cdot u\ud s,
\end{equation*}
where, again, $\jump{\tau n_f}$ denotes the jump of $\tau n_f$ across the face $f$.
Only the interior faces enter the sum, since $u$ vanishes on
$\partial\Omega$.
Now $\tau n_f =Q_{n_f}(\tau n_f)+(n_f'\tau n_f)n_f$, so
\begin{align*}
E_h(u,\tau)
&= \sum_{f \in \Delta_2(\T_h)} \left\{\int_f \jump{Q_{n_f}(\tau n_f)}\cdot u \ud s
+\int_f \jump{n_f'\tau n_f} ( n_f'u) \ud s\right\}
\\
&= \sum_{f \in \Delta_2(\T_h)} \int_f \jump{Q_{n_f}(\tau n_f)} \cdot u \ud s,
\end{align*}
where the last equality follows from \eqref{jump}.

We let $W_h \subset V_h$ be the 
subspace of the displacement space  $V_h$ consisting of continuous
functions
which are zero on the boundary of $\Omega$. In other words, $W_h$ is
the standard piecewise linear subspace of $\ring H^1(\Omega;\R^3)$.
For any $\tau \in \Sigma_h$ the jumps, $\jump{\tau n_f}$, are orthogonal
to $\P_1(f;\R^3)$, so
$E_h (w, \tau)=0$ for any $w \in W_h$.  

\begin{lemma}\label{consistency}
We may bound the consistency error
\begin{equation}\label{Eerr1}
|E_h (u, \tau)| \leq c h (\|\tau\|_0 + h\| \div_h \tau \|_0)\|u\|_{2} , 
\quad \tau \in \Sigma_h ,\ u \in 
\ring H^1(\Omega;\R^3)\cap H^{2} (\Omega; \R^3).
\end{equation}
Furthermore, for any $\rho \in H^1(\Omega;\Sym)$
\begin{equation}\label{Eerr2}
|E_h (u, \Pi_h\rho)| \leq c h^2 \|\rho \|_1 \|u\|_{2} , 
\quad u \in 
\ring H^1(\Omega;\R^3)\cap H^{2} (\Omega; \R^3).
\end{equation}
\end{lemma}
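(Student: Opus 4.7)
The key observation, already developed in the excerpt, is the reduced expression
$$E_h(u,\tau) = \sum_{f \in \Delta_2(\T_h)} \int_f \jump{Q_{n_f}(\tau n_f)} \cdot u \, ds,$$
together with the Galerkin-style orthogonality $E_h(w,\tau) = 0$ for every $w \in W_h$. My plan is to exploit this orthogonality by replacing $u$ with $u-w$ for a suitable continuous piecewise linear approximation $w$, then bound each face integral by Cauchy--Schwarz and scaled trace inequalities.

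Concretely, let $w \in W_h$ be the Scott--Zhang (or Cl\'ement) interpolant of $u$ in the space of continuous piecewise linear vector fields vanishing on $\partial\Omega$. The standard local bounds $\|u-w\|_{0,K} \le c h^2 \|u\|_{2,\tilde K}$ and $|u-w|_{1,K} \le c h \|u\|_{2,\tilde K}$, combined with a scaled trace inequality on each tetrahedron of diameter $\sim h$, give $\|u-w\|_{0,f}^2 \le c h^3 \|u\|_{2,\tilde K}^2$ on each face $f \subset \partial K$. For the stress side, scaling to a reference tetrahedron produces
$$\|\tau n_f\|_{0,f}^2 \le c \bigl(h^{-1}\|\tau\|_{0,K}^2 + h \|\div\tau\|_{0,K}^2\bigr),$$
which controls $Q_{n_f}(\tau n_f)$ and hence its jump across $f$. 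Applying Cauchy--Schwarz face-by-face and then once more in the sum over faces delivers \eqref{Eerr1}.

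For \eqref{Eerr2}, I would first note that integration by parts gives $E_h(u,\rho) = 0$ for every $\rho \in H(\div,\Omega;\Sym)$, so $E_h(u,\Pi_h\rho) = E_h(u,\Pi_h\rho - \rho)$. Inserting $\tau = \Pi_h\rho - \rho$ into \eqref{Eerr1}, the approximation estimate \eqref{bdpi0} yields $\|\Pi_h\rho - \rho\|_0 \le c h\|\rho\|_1$, while the commutativity property \eqref{commut} gives $\div_h(\Pi_h\rho - \rho) = P_h\div\rho - \div\rho$, whose $L^2$ norm is bounded by $\|\div\rho\|_0 \le \|\rho\|_1$. Plugging these into \eqref{Eerr1} yields $c h(h\|\rho\|_1 + h\|\rho\|_1)\|u\|_2$, which is \eqref{Eerr2}.

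The main technical step is the scaled trace inequality bounding $\tau n_f$ by $\|\tau\|_0$ and $\|\div\tau\|_0$ with the correct powers of $h$; this is routine via a reference-element scaling under shape regularity, with the constant depending only on the shape-regularity bound. The remaining ingredients---interpolation estimates for $w$ and the $L^2$ stability of $P_h$---are entirely standard, so the proof reduces to combining tools already established in the paper.
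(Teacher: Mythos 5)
Your proof of \eqref{Eerr1} is correct but takes a genuinely different route from the paper's. The paper works directly from the volume-integral definition \eqref{defce}: writing $E_h(u,\tau)=E_h(u-u_h^I,\tau)$ with $u_h^I\in W_h$ the piecewise linear interpolant, it applies Cauchy--Schwarz to the two volume terms and uses $\|u-u_h^I\|_0\le ch^2\|u\|_2$ and $\|\eps(u-u_h^I)\|_0\le ch\|u\|_2$; no trace inequalities are needed. Your face-jump route (Scott--Zhang interpolant, scaled trace estimate for $u-w$, discrete trace inequality for the piecewise polynomial $\tau$) also works, and in fact yields the slightly sharper bound $ch\|\tau\|_0\|u\|_2$ without the divergence term. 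One caveat: the inequality $\|\tau n_f\|_{0,f}^2\le c\bigl(h^{-1}\|\tau\|_{0,K}^2+h\|\div\tau\|_{0,K}^2\bigr)$ is \emph{not} a consequence of scaling alone for general $H(\div)$ fields, whose normal traces lie only in $H^{-1/2}(f)$; it holds here because $\tau|_K$ is a polynomial, so norm equivalence on the reference element applies. You should say so, because this is exactly where your argument for \eqref{Eerr2} breaks down.

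The proof of \eqref{Eerr2} has a genuine gap. You insert $\tau=\Pi_h\rho-\rho$ into \eqref{Eerr1}, but this $\tau$ is not in $\Sigma_h$, so the estimate as you stated and proved it does not apply: your discrete trace inequality fails for the non-polynomial part $\rho$. The obvious repairs do not immediately work either. If you discard $\rho$ using $\jump{\rho n_f}=0$ and estimate only $\jump{\Pi_h\rho\,n_f}$ by the discrete trace inequality, you get $ch\|\Pi_h\rho\|_0\|u\|_2\le ch\|\rho\|_1\|u\|_2$ and lose the extra power of $h$; the whole point is to keep $\Pi_h\rho-\rho$ together. To salvage the face-jump route you would need the $H^1$ scaled trace inequality $\|\eta\|_{0,\partial K}^2\le c\bigl(h^{-1}\|\eta\|_{0,K}^2+h|\eta|_{1,K}^2\bigr)$ applied to $\eta=\Pi_h\rho-\rho$, together with a first-order seminorm bound $|\Pi_h\rho-\rho|_{1,K}\le c\|\rho\|_{1,K}$, which is not \eqref{bdpi0} and requires a separate scaling and inverse-estimate argument. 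The paper avoids all of this: its volume-form proof of \eqref{Eerr1} uses only the identity $E_h(u_h^I,\tau)=0$, which holds for all $\tau\in\Sigma_h+H(\div,\Omega;\Sym)$, so the same two-line Cauchy--Schwarz argument applied to $\Pi_h\rho-\rho$, combined with \eqref{bdpi0} and $\|\div_h(\Pi_h\rho-\rho)\|_0\le\|\div\rho\|_0$ from \eqref{commut}, gives \eqref{Eerr2} directly.
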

\begin{proof}
For any $\tau \in \Sigma_h$ we have $E_h(u,  \tau)  =  E_h(u-u_h^I,
\tau)$,
where $u_h^I \in W_h$ is the piecewise linear interpolant
of $u$.
Referring to the definition \eqref{defce}, we obtain
\[
|E_h (u, \tau)| \leq c(\|\div_h\tau \|_0\| u - u_h^I \|_0 + \| 
\tau\|_0 ||\epsilon(u-u_h^I)||_0\\
\leq c h( \|\tau \|_0 + h\|\div_h \tau \|_0) \|u \|_2,
\]
which is \eqref{Eerr1}. For the second estimate we use that
$E_h(u,  \Pi_h\rho)  =  E_h(u - u_h^I, \Pi_h\rho) = E_h(u- u_h^I,  \Pi_h\rho - \rho)$,
which implies that
\[
E_h(u,  \Pi_h\rho)= \sum_{K \in \T_h } \int_{K} \div_h(\Pi_h \rho -
\rho) \cdot (u-u_h^I)\,  dx + \int_{K} (\Pi_h \rho - \rho)
:\epsilon(u-u_h^I) \, dx.
\]
Utilizing the estimate \eqref{bdpi0}, the bound 
\[
|E_h (u, \Pi_h\rho)| \leq c(\|\div \rho \|_0\| u - u_h^I \|_0 + \| \Pi_h \rho -
\rho\|_0 ||\epsilon(u-u_h^I)||_0
\leq c h^2 \|\rho \|_1 \|u \|_2
\]
is an immediate consequence.
\end{proof}

\begin{rem}
The consistency error estimate \eqref{Eerr1} holds for any $u\in\ring H^1(\Omega;\R^3)$
satisfying $u|_K\in H^2(K,\R^3)$ for
each $K \in \T_h$, provided one replaces $\|u\|_{2}$ with the
broken $H^2$ norm $(\sum_{K
\in \T_h } \|u\|_{H^2(K,\R^3)}^2)^{1/2}.$
\end{rem}

With these ingredients assembled,
error bounds for the finite element method now follow in a straightforward fashion.
\begin{thm}
\label{th1}
Let $(\sigma, u)$ be the solution of \eqref{ncf} and $(\sigma_h, u_h)$ the solution of
\eqref{ncfD}. Then
\begin{align}\label{basic-est}
\| \sigma - \sigma_h \|_0 &\leq c h \|u\|_2, \nonumber\\ 
\| \div \sigma - \div_h \sigma_h \|_0 &\leq c h^m \| \div \sigma \|_m, \quad
0 \leq m \leq 2, \\
 \| u - u_h \|_0 &\leq c h\|u\|_{2}.\nonumber
\end{align} 
Furthemore, if problem \eqref{ncf} admits full elliptic regularity, such that
the estimate \eqref{regularity} holds, then
\[
\| u - u_h \|_0 \leq c h^2 \| u \|_2.
\]
\end{thm}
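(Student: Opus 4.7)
The plan is to derive a single error identity, exploit it with two judicious choices of test stress for the stress and displacement bounds, and then perform an Aubin--Nitsche duality argument for the sharper displacement estimate. Subtracting \eqref{ncfD} from \eqref{ncf}, using the relation $A\sigma=\eps(u)$ implied by the first continuous equation, and invoking the definition \eqref{defce}, I obtain the error identity
\[
\int_\Omega A(\sigma-\sigma_h):\tau\ud x + \int_\Omega \div_h\tau\cdot(u-u_h)\ud x = E_h(u,\tau), \quad \tau\in\Sigma_h.
\]
The second equation of \eqref{ncfD}, together with the inclusion \eqref{inclusion}, gives $\div_h\sigma_h = P_h\div\sigma$, so the divergence estimate is immediate from \eqref{Papprox}, and combined with the commutativity \eqref{commut} yields also $\div_h(\Pi_h\sigma-\sigma_h)=0$, a fact I need below.

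For the stress bound I test the identity with $\tau=\Pi_h\sigma-\sigma_h$, which eliminates the middle term. Coercivity of $A$, Cauchy--Schwarz after splitting $\sigma-\sigma_h=(\sigma-\Pi_h\sigma)+(\Pi_h\sigma-\sigma_h)$, and \eqref{Eerr1} with $\div_h\tau=0$ bound $\|\Pi_h\sigma-\sigma_h\|_0$ by $c(\|\sigma-\Pi_h\sigma\|_0+h\|u\|_2)$; then \eqref{bdpi0} and \eqref{regularity} yield $\|\sigma-\sigma_h\|_0\le ch\|u\|_2$. For the preliminary displacement bound I pick $\rho\in H^1(\Omega;\Sym)$ with $\div\rho = P_hu-u_h$ and $\|\rho\|_1\le c\|P_hu-u_h\|_0$ (a standard continuous stability statement for the Hellinger--Reissner system) and take $\tau=\Pi_h\rho$; commutativity \eqref{commut} turns the middle term into $\|P_hu-u_h\|_0^2$, the stress term is controlled by the preceding step, and \eqref{Eerr2} bounds the consistency term by $ch^2\|u\|_2\|P_hu-u_h\|_0$. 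Combining with $\|u-P_hu\|_0\le ch\|u\|_1$ produces $\|u-u_h\|_0\le ch\|u\|_2$.

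The $O(h^2)$ displacement bound under full regularity is obtained by duality. For $g\in V_h$ of unit $L^2$ norm, I solve the adjoint problem $A\phi=\eps(\psi)$, $\div\phi = g$, $\psi\in \ring H^1(\Omega;\R^3)$, whose full regularity gives $\|\phi\|_1+\|\psi\|_2\le c$. Since $g=P_hg$, one has $P_h\div\phi=g$, hence $\int(u-u_h)\cdot g = \int(u-u_h)\cdot\div_h\Pi_h\phi$ by \eqref{commut}. Feeding $\tau=\Pi_h\phi$ into the error identity gives
\[
\int_\Omega (u-u_h)\cdot g\ud x = E_h(u,\Pi_h\phi) - \int_\Omega A(\sigma-\sigma_h):\Pi_h\phi\ud x.
\]
The first term is $O(h^2\|u\|_2)$ directly from \eqref{Eerr2}. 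For the second I split $\Pi_h\phi = \phi+(\Pi_h\phi-\phi)$: the correction yields $O(h^2\|u\|_2)$ via the stress bound and \eqref{bdpi0}, while the $\phi$ piece is $\int(\sigma-\sigma_h):\eps(\psi)$, which by the definition \eqref{defce} equals $E_h(\psi,\sigma-\sigma_h) - \int\div_h(\sigma-\sigma_h)\cdot\psi\ud x$. Applying \eqref{Eerr1} with $\psi$ in place of $u$, together with the stress and divergence bounds already proved, handles the first piece; the second equals $\int(I-P_h)f\cdot(\psi-P_h\psi)\ud x$ by the orthogonality of $(I-P_h)f$ to $V_h$, and is $O(h^2\|u\|_2)$ via \eqref{Papprox} at $m=2$. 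Taking the supremum over $g$, combined with $\|u-P_hu\|_0\le ch^2\|u\|_2$, completes the proof.

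The main obstacle is this final duality step: the nonconformity enters simultaneously through the primal consistency error $E_h(u,\cdot)$ and through the piecewise integration by parts against $\psi$, and one must verify that every resulting term is genuinely second order. The two inputs that make this work are the orthogonality $(I-P_h)f\perp V_h$, which gains one extra power of $h$ from the $H^2$ regularity of $\psi$, and the improved consistency bound \eqref{Eerr2}, which captures the $O(h^2)$ cancellation already available whenever the test stress lies in the range of $\Pi_h$.
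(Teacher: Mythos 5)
Your proposal is correct and follows essentially the same route as the paper: the same error identity, the test function $\tau=\Pi_h\sigma-\sigma_h$ for the stress bound, the auxiliary $\rho$ with $\div\rho=P_hu-u_h$ for the first-order displacement bound, and an Aubin--Nitsche duality argument for the quadratic bound. The only substantive difference is in how the term $\int A(\sigma-\sigma_h):\phi\ud x=\int(\sigma-\sigma_h):\eps(\psi)\ud x$ is handled in the duality step: the paper subtracts the interpolant $w_h^I$ of the dual displacement, observes that $\int(\sigma-\sigma_h):\eps(w_h^I)\ud x=0$ (by the same orthogonality $(I-P_h)f\perp V_h$ and the vanishing of $E_h$ on $W_h$ that you use), and bounds the remainder $\int(\sigma-\sigma_h):\eps(\psi-w_h^I)\ud x$ by Cauchy--Schwarz; you instead keep $\eps(\psi)$ and split off $E_h(\psi,\sigma-\sigma_h)$ and $\int\div_h(\sigma-\sigma_h)\cdot\psi\ud x$. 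Your version is fine, but note that \eqref{Eerr1} is stated only for $\tau\in\Sigma_h$, whereas you apply it with $\tau=\sigma-\sigma_h\in\Sigma_h+H(\div,\Omega;\Sym)$; this extension does hold (the identity $E_h(\psi,\tau)=E_h(\psi-\psi_h^I,\tau)$ persists because $E_h(\cdot,\sigma)\equiv0$ for the conforming part), but it should be said explicitly, since the naive alternative $E_h(\psi,\sigma-\sigma_h)=-E_h(\psi,\sigma_h)$ fed into \eqref{Eerr1} would only yield $O(h)$.
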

\begin{proof}
Subtracting the first equations of \eqref{ncf} and \eqref{ncfD} and invoking the
definition \eqref{defce} of the consistency error, we get the error equation
\begin{equation}\label{eeq}
 \int_\Omega [A(\sigma-\sigma_h):\tau + (u-u_h)\cdot\div_h\tau]\ud x
 = E_h(u,\tau),\quad \tau\in \Sigma_h.
\end{equation}
Comparing the second equations in \eqref{ncf} and \eqref{ncfD}, we obtain
$\div_h\sigma_h=P_h\div\sigma$, which immediately gives
the claimed error estimate on $\div\sigma$.
Using the commutativity \eqref{commut}, we find that 
$\div_h(\Pi_h\sigma-\sigma_h)=0$.
Choosing $\tau=\Pi_h\sigma-\sigma_h$ in \eqref{eeq}, we get
\begin{equation*}
\int_\Omega A(\sigma-\sigma_h):(\Pi_h\sigma-\sigma_h)\ud x = E_h(u,\Pi_h\sigma-\sigma_h),
\end{equation*}
which implies that
\begin{equation*}
\|\sigma-\sigma_h\|^2_A\le\|\sigma- \Pi_h\sigma\|^2_A
+ 2 E_h(u,\Pi_h\sigma-\sigma_h),
\end{equation*}
where $\|\tau\|_A^2:=\int A\tau:\tau\ud x$. Combining with \eqref{bdpi0} and \eqref{Eerr1}
we conclude that
$$
\|\sigma - \sigma_h\| \le c h ( \|\sigma\|_1 + \|u\|_2) \le  c h\|u\|_2,
$$
which is the desired error estimate for $\sigma$.

To get the error estimate for $u$, we choose $\rho \in H^1(\Omega,\Sym)$ such that
$\div \rho = P_hu - u_h$ and  $\|\rho\|_1 \le c \|P_hu - u_h\|_0$.
Then, in light of the commutativity property \eqref{commut}
and the bound \eqref{bdpi}, $\tau:=\Pi_h\rho\in\Sigma_h$ satisfies
$\div_h\tau=P_hu - u_h$ and $\|\tau\|_0 \le c  \|P_hu - u_h\|_0$.
Hence, using \eqref{inclusion}, \eqref{eeq}, and
\eqref{Eerr1}, we get
\begin{align}\label{uerr-rep}
\|P_hu &- u_h\|^2_0 = \int_{\Omega} \div_h \tau 
\cdot (P_h u - u_h)\, dx = \int_{\Omega} \div_h \tau 
\cdot ( u - u_h)\, dx \nonumber\\
&= - \int_{\Omega} A(\sigma - \sigma_h) :   \tau \, dx + E_h(u,  \tau)
\le c (\|\sigma - \sigma_h\|_0 + h \|u\|_2)\|P_hu - u_h\|_0 .
\end{align}
This gives 
$\|P_hu - u_h\|_0 \le ch \|u\|_2$, and then, by the triangle inequality
and \eqref{Papprox}, the error estimate for $u$.

To establish the final quadratic estimate for $\|u - u_h
\|_0$ in the case of full regularity, we use a duality argument. 
Let $\rho = A^{-1}\eps(w)$, where $w \in \ring
H^1(\Omega;\R^3) \cap H^2(\Omega;\R^3)$ solves the problem $\div
A^{-1}\eps (w) = P_hu - u_h$. It follows from \eqref{regularity} that 
\begin{equation}\label{reg*}
\|\rho \|_1 + \|w \|_2 \le c \|P_h u - u_h \|_0.
\end{equation}
By introducing $w_h^I \in W_h$ as the piecewise linear interpolant of
$w$,
we now obtain from \eqref{uerr-rep} that 
\begin{align*}
\|P_hu - u_h\|^2_0 &= - \int_{\Omega} A(\sigma - \sigma_h) :
\Pi_h\rho \, dx + E_h(u,  \Pi_h\rho)\\
&= - \int_{\Omega} A(\sigma - \sigma_h) :   (\Pi_h\rho-\rho) \, dx +
E_h(u,  \Pi_h\rho)  
- \int_{\Omega} (\sigma - \sigma_h) :\epsilon(w-w_h^I) \, dx,
\end{align*}
where the final equality follows since
\[
\int_{\Omega} (\sigma - \sigma_h) :\epsilon(w_h^I) \, dx = - \sum_{K
  \in \T_h} \int_{K} \div_h (\sigma - \sigma_h) \cdot w_h^I \, dx +
E_h(w_h^I,\sigma- \sigma_h) = 0.
\]
However, by utilizing \eqref{bdpi0}, \eqref{Eerr2}, 
the estimate for $\|\sigma - \sigma_h\|_0$ given in \eqref{basic-est},
combined with the approximation property of the interpolant $w_h^I$,
we obtain from the representation of $\|P_hu - u_h\|^2_0$ above that
\begin{align*}
\|P_hu - u_h\|^2_0 &\le c(h^2\|\rho \|_1\|u \|_2 + \|\sigma -\sigma_h\|
\| \eps(w- w_h^I)\|_0)\\
&\le ch^2 \|u \|_2(\|\rho \|_1 + \|w\|_2) \le c h^2\|u \|_2\|P_hu -
u_h\|_0,
\end{align*}
where we have used \eqref{reg*} to obtain the
final inequality.
This gives $\|P_hu - u_h\|_0 \le ch^2 \| u \|_2$. As above, the
desired estimate for $\| u - u_h \|_0$ now follows from \eqref{Papprox}
and the triangle inequality.
\end{proof}

\begin{rem}
Although $\|\sigma-\Pi_h\sigma\|_0=O(h^2)$,
we  have only shown first order convergence of the finite
element solution: $\|\sigma-\sigma_h\|_0=O(h)$.  The lower rate of convergence
is due to the consistency error estimated in \eqref{Eerr1}.
\end{rem}

\section{The reduced element} \label{reduced}
As for the two-dimensional element in \cite{Arnold2003}, there is a variant of the
element using smaller spaces.  Let
$$
\mathbb{T}(K)= \{\, v\in \P_1(K;\R^3)\,|\, v(x) = a + b\x x, \ a,b\in\R^3\,\},
$$
be the space of rigid motions on $K$.  In the reduced method we take $\tilde V_K:=\mathbb{T}(K)$
instead of $V_K=P_1(K;\R^3)$ as the space of shape functions for displacement, so the
dimension is reduced from 12 to 6.  As shape functions for stress we take
$$
\tilde\Sigma_K = \{ \, \tau \in \Sigma_K\,|\, \div_h \tau \in \mathbb{T}\, \},
$$
so $\dim\tilde\Sigma_K = 36$.
As degrees of freedom for $\tilde{\Sigma}_K$ we take the face moments
\eqref{dof1} but dispense with the interior degrees of freedom \eqref{dof2}.

Let us see how the unisolvence argument adapts to these elements.
If $\tau\in\tilde\Sigma_K$ with vanishing degrees of freedom,
then $\div\tau\in\mathbb T(K)$, and for all $v\in\mathbb T(K)$,
\begin{equation*}
\int_K (\div \tau) v \ud x = - \int_K \tau: \epsilon(v) \ud x + \int_{\partial K} \tau n 
\ v
\ud s = 0,
\end{equation*}
using the degrees of freedom and the fact that $\eps(v)=0$.  Thus $\div\tau=0$ on $K$
and for all $v\in\P_1(K;\R^3)$,
$$
\int_K \tau: \epsilon(v) \ud x = -  \int_K (\div \tau) v \ud x + \int_{\partial K} \tau n 
\ v
\ud s =0.
$$
This shows that $\int_K\tau\ud x=0$, so all degrees of freedom \eqref{dof2} vanish as well.
Therefore the previous unisolvence result applies, and gives $\tau\equiv 0$.

A similar argument establishes the commutativity of the projection into $\tilde\Sigma_h$
(the analogue of \eqref{commut}), and
the analogue of the inclusion \eqref{inclusion} obviously holds. The space $\tilde\Sigma_K$ still contains
$\P_1(K;\Sym)$ so the approximability \eqref{bdpi0} still holds, but the approximability
of $\tilde V_K$ is of one order lower, i.e., in \eqref{Papprox} $m$ can be at most $1$.
As a result, the error estimates given by \eqref{basic-est} in
Theorem~\ref{th1} carry over, 
except that $m$ is limited
to $1$ in the error estimate for $\div\sigma$. 

\bibliographystyle{amsplain}
\bibliography{nonconfelas3d}

\providecommand{\bysame}{\leavevmode\hbox to3em{\hrulefill}\thinspace}
\providecommand{\MR}{\relax\ifhmode\unskip\space\fi MR }
\providecommand{\MRhref}[2]{%
  \href{http://www.ams.org/mathscinet-getitem?mr=#1}{#2}
}
\providecommand{\href}[2]{#2}
\begin{thebibliography}{10}

\bibitem{Adams2005}
Scot Adams and Bernardo Cockburn, \emph{A mixed finite element method for
  elasticity in three dimensions}, J. Sci. Comput. \textbf{25} (2005), no.~3,
  515--521. \MR{2221175 (2006m:65251)}

\bibitem{Amara1979}
Mohamed Amara and Jean-Marie Thomas, \emph{Equilibrium finite elements for the
  linear elastic problem}, Numer. Math. \textbf{33} (1979), no.~4, 367--383.
  \MR{553347 (81b:65096)}

\bibitem{Arnold2005}
Douglas~N. Arnold and Gerard Awanou, \emph{Rectangular mixed finite elements
  for elasticity}, Math. Models Methods Appl. Sci. \textbf{15} (2005), no.~9,
  1417--1429. \MR{2166210 (2006f:65112)}

\bibitem{Arnold2008}
Douglas~N. Arnold, Gerard Awanou, and Ragnar Winther, \emph{Finite elements for
  symmetric tensors in three dimensions}, Math. Comp. \textbf{77} (2008),
  no.~263, 1229--1251. \MR{2398766 (2009b:65291)}

\bibitem{minc}
Douglas~N Arnold and Franco Brezzi, \emph{Mixed and nonconforming finite
  element methods: implementation, postprocessing and error estimates},
  RAIRO-M2AN Modelisation Math et Analyse \textbf{19} (1985), no.~1, 7--32.
  \MR{813687 (87g:65126)}

\bibitem{Arnold1984a}
Douglas~N. Arnold, Franco Brezzi, and Jim Douglas, Jr., \emph{P{EERS}: a new
  mixed finite element for plane elasticity}, Japan J. Appl. Math. \textbf{1}
  (1984), no.~2, 347--367. \MR{840802 (87h:65189)}

\bibitem{Arnold1984}
Douglas~N. Arnold, Jim Douglas, Jr., and Chaitan~P. Gupta, \emph{A family of
  higher order mixed finite element methods for plane elasticity}, Numer. Math.
  \textbf{45} (1984), no.~1, 1--22. \MR{761879 (86a:65112)}

\bibitem{Arnold2006}
Douglas~N. Arnold, Richard~S. Falk, and Ragnar Winther, \emph{Finite element
  exterior calculus, homological techniques, and applications}, Acta Numer.
  \textbf{15} (2006), 1--155. \MR{2269741 (2007j:58002)}

\bibitem{Arnold2007}
\bysame, \emph{Mixed finite element methods for linear elasticity with weakly
  imposed symmetry}, Math. Comp. \textbf{76} (2007), no.~260, 1699--1723
  (electronic). \MR{2336264 (2008k:74057)}

\bibitem{Arnold2002}
Douglas~N. Arnold and Ragnar Winther, \emph{Mixed finite elements for
  elasticity}, Numer. Math. \textbf{92} (2002), no.~3, 401--419. \MR{1930384
  (2003i:65103)}

\bibitem{Arnold2003}
\bysame, \emph{Nonconforming mixed elements for elasticity}, Math. Models
  Methods Appl. Sci. \textbf{13} (2003), no.~3, 295--307, Dedicated to Jim
  Douglas, Jr. on the occasion of his 75th birthday. \MR{1977627 (2004f:65176)}

\bibitem{Awanou2009}
Gerard Awanou, \emph{A rotated nonconforming rectangular mixed element for
  elasticity}, Calcolo \textbf{46} (2009), no.~1, 49--60. \MR{2495247
  (2010c:74057)}

\bibitem{Awanou10b}
\bysame, \emph{Symmetric matrix fields in the finite element method}, Symmetry
  \textbf{2} (2010), no.~3, 1375--1389. \MR{2804836 (2012e:74013)}

\bibitem{Chen2010}
Shao-Chun Chen and Ya-Na Yang, \emph{Conforming rectangular mixed finite
  elements for elasticity}, Journal of Scientific Computing \textbf{47} (2010),
  no.~1, 93--108. \MR{2804836 (2012e:74013)}

\bibitem{Guzman10d}
Bernardo Cockburn, Jayadeep Gopalakrishnan, and Johnny Guzm{\'a}n, \emph{A new
  elasticity element made for enforcing weak stress symmetry}, Math. Comp.
  \textbf{79} (2010), no.~271, 1331--1349. \MR{2629995 (2011m:65276)}

\bibitem{Eisenberg-Malvern}
Martin~A. Eisenberg and Lawrence~E. Malvern, \emph{On finite element
  integration in natural co-ordinates}, International Journal for Numerical
  Methods in Engineering \textbf{7} (1973), no.~4, 574--575.

\bibitem{Falk08}
Richard~S. Falk, \emph{Finite element methods for linear elasticity}, Mixed
  finite elements, compatibility conditions, and applications (Daniele Boffi
  and Lucia Gastaldi, eds.), Lecture Notes in Mathematics, vol. 1939,
  Springer-Verlag, Berlin, 2008, Lectures given at the C.I.M.E. Summer School
  held in Cetraro, June 26--July 1, 2006. \MR{2459075 (2010h:65219)}

\bibitem{Guzman2010}
Jayadeep Gopalakrishnan and Johnny Guzm{\'a}n, \emph{Symmetric nonconforming
  mixed finite elements for linear elasticity}, SIAM J. Numer. Anal.
  \textbf{49} (2011), no.~4, 1504--1520. \MR{2831058}

\bibitem{Guzman2012}
\bysame, \emph{A second elasticity element using the matrix bubble}, IMA J.
  Numer. Anal. \textbf{32} (2012), no.~1, 352--372. \MR{2875255}

\bibitem{Guzman10c}
Johnny Guzm{\'a}n, \emph{A unified analysis of several mixed methods for
  elasticity with weak stress symmetry}, J. Sci. Comput. \textbf{44} (2010),
  no.~2, 156--169. \MR{2659794 (2011h:74021)}

\bibitem{Hu2007/08}
Jun Hu and Zhong-Ci Shi, \emph{Lower order rectangular nonconforming mixed
  finite elements for plane elasticity}, SIAM J. Numer. Anal. \textbf{46}
  (2007/08), no.~1, 88--102. \MR{2377256 (2008m:65321)}

\bibitem{Johnson1978}
Claes Johnson and Bertrand Mercier, \emph{Some equilibrium finite element
  methods for two-dimensional elasticity problems}, Numer. Math. \textbf{30}
  (1978), no.~1, 103--116. \MR{0483904 (58 \#3856)}

\bibitem{Man2009}
Hong-Ying Man, Jun Hu, and Zhong-Ci Shi, \emph{Lower order rectangular
  nonconforming mixed finite element for the three-dimensional elasticity
  problem}, Math. Models Methods Appl. Sci. \textbf{19} (2009), no.~1, 51--65.
  \MR{2484491 (2009m:74012)}

\bibitem{Morley1989}
Mary~E. Morley, \emph{A family of mixed finite elements for linear elasticity},
  Numer. Math. \textbf{55} (1989), no.~6, 633--666. \MR{1005064 (90f:73006)}

\bibitem{Nicaise}
Serge Nicaise, \emph{Regularity of the solutions of elliptic systems in
  polyhedral domains}, Bull. Belg. Math. Soc. Simon Stevin \textbf{4} (1997),
  no.~3, 411--429. \MR{1457079 (98k:35044)}

\bibitem{Stenberg1986}
Rolf Stenberg, \emph{On the construction of optimal mixed finite element
  methods for the linear elasticity problem}, Numer. Math. \textbf{48} (1986),
  no.~4, 447--462. \MR{834332 (87i:73062)}

\bibitem{Stenberg1988a}
\bysame, \emph{A family of mixed finite elements for the elasticity problem},
  Numer. Math. \textbf{53} (1988), no.~5, 513--538. \MR{954768 (89h:65192)}

\bibitem{Stenberg1988}
\bysame, \emph{Two low-order mixed methods for the elasticity problem}, The
  mathematics of finite elements and applications, {VI} ({U}xbridge, 1987),
  Academic Press, London, 1988, pp.~271--280. \MR{956898 (89j:73074)}

\bibitem{Yi2005}
Son-Young Yi, \emph{Nonconforming mixed finite element methods for linear
  elasticity using rectangular elements in two and three dimensions}, Calcolo
  \textbf{42} (2005), no.~2, 115--133. \MR{2158594 (2006h:74071)}

\bibitem{Yi2006}
\bysame, \emph{A new nonconforming mixed finite element method for linear
  elasticity}, Math. Models Methods Appl. Sci. \textbf{16} (2006), no.~7,
  979--999. \MR{2250030 (2007e:65127)}

\end{thebibliography}

\end{document}